\newtheorem{theorem}{Theorem}[section]
\newtheorem{lemma}[theorem]{Lemma}
\newtheorem{corollary}[theorem]{Corollary}
\theoremstyle{definition}
\newtheorem{dfn}[theorem]{Definition}
\newtheorem*{pr}{Proof of Theorem 3.1}
\theoremstyle{remark}
\newtheorem{remark}[theorem]{Remark}
\title{On the conditions of fixed-point theorems concerning $F$-contractions} 
\author{S\'andor Kaj\'ant\'o}
\author{Andor Luk\'acs}
\address{``Babe\c{s}-Bolyai'' University \\ Faculty of Mathematics and Computer Science \\
Kog\u{a}lniceanu Street Nr. 1 \\
400084, Cluj-Napoca \\
Romania}
\email{lukacs.andor@math.ubbcluj.ro}
\begin{document}

\begin{abstract}
We prove a fixed-point theorem that generalises and simplifies a number of results in the theory of $F$-contractions. We show that all of the previously imposed conditions on the operator can be either omitted or relaxed. Furthermore, our result is formulated in the more general context of $b$-metric spaces and $\varphi$-contractions. We also point out that the framework of $F$-contractions can be reformulated in an equivalent way that is both closer in spirit to the classical syntax of Banach-type fixed point theorems, and also more natural and easier to deal with in the proofs. 
\end{abstract}
\maketitle
{\bf Keywords:}
$F$-contractions, $b$-metric spaces, fixed-point theorems

{\bf MSC (2010):} 47H09, 47H10, 54E50, 54H25

\section{Introduction}
One of the most significant result of metric fixed-point theory is Banach's fixed-point theorem. 
Since this theorem has many relevant applications both in mathematics and in other sciences, it gave rise to a century-long research in the theory of fixed-points that is also flourishing nowadays. 
As a result, in the literature there are many generalizations of Banach's theorem. 
These generalizations usually follow two different approaches: on the one hand, the contractive condition can be relaxed, on the other, the metric space itself can be generalised. 
Following the first approach, in 2012 Wardowski in \cite{wardowski_fcontr_2012} proved a fixed-point theorem for a new type of operators on a complete metric space that had a remarkable impact on metric fixed-point theory. These new type of operators have been called by Wardowski $F$-contractions. They consist of those operators $T$ on a (complete) metric space that satisfy the contractive condition
\[Tx\ne Ty \mbox{ implies } \tau+F(d(Tx,Ty))\le F(d(x,y))\]
for some $\tau>0$ and some $F\colon(0,\infty)\to\mathbb{R}$ that satisfies (F1) $F$ is strictly increasing; (F2) $F(\alpha_n)\to-\infty$ if and only if $\alpha_n\to0$; (F3) $\lim_{x\to0^+}x^kF(x)=0$ for some $k\in(0,1)$.

The theory of $F$-contractions was recently developed further by many authors. 
An important guideline in some of these developments was the investigation of the necessity of the conditions (F1), (F2) and (F3). 
Secelean in \cite{secelean_2016} proved that (F3) can be dropped if we assume some boundedness condition on the operator $T$. 
Furthermore, if $F$ is continuous then (F3) can be dropped without any extra assumption on $T$.

In \cite{secelean_wardowski_2016} the authors introduced the notion of weak $\psi F$-contractions (a generalization of $F$-contractions using the idea of  classical $\varphi$-contractions introduced by Browder in \cite{browder_1968}). They generalised the previously discussed results of Secelean, by replacing (F1) with a weaker boun\-ded\-ness assumption on $F$ (and still requiring the boundedness assumption on $T$). 

On a different note, in \cite{kaj_luk_2017} the authors extended the original results of Wardowski to $b$-metric spaces without using condition (F2). 

In this paper we generalise and simplify both of the mentioned results in \cite{secelean_wardowski_2016} and \cite{kaj_luk_2017}. 
Our main result uses only a relaxed version of (F2) in the framework of generalised $\varphi$-contractions in $b$-metric spaces to conclude that our more general operators admit a unique fixed-point. 
The simplification lies on the observation that the framework of $F$-contractions can be put in a more natural context (closer in spirit to the original Banach fixed-point theorem and easier to deal with in the proofs).

\section{Preliminary definitions and results}
In this section we enlist those basic structures and terms that will be used to present our results. The first three definitions are classic, while the last two give a simplified version of Wardowski's $F$-contractions (\cite{wardowski_fcontr_2012}).

\begin{dfn}[\cite{bakhtin_1989,czerwik_1993}] We say that \emph{$(X,d)$ is a $b$-metric space with constant $s\ge1$} if $d\colon X\times X\to [0,\infty)$ satisfies the following conditions for every $x,y,z\in X$:
\begin{itemize}
		\item[(i)] $d(x,y)=0$ if and only if $x=y$;
		\item[(ii)] $d(x,y)=d(y,x)$;
		\item[(iii)] $d(x,y)\le s[d(x,z)+d(z,y)]$.
\end{itemize}
\end{dfn}
In general, the distance functional $d$ is not continuous. The following well-known lemma will help us to overcome this problem. 
\begin{lemma}[\cite{kirk_fixed_2014}]
	 If $(X,d)$ is a $b$-metric space with constant $s\ge1$, $x^*,y^*\in X$ and $(x_n)_{n\in\mathbb{N}}$ is a convergent sequence in $X$ with $x_n \to x^*$ then
    \[
    \frac{1}{s} d(x^*, y^*)\le \liminf_{n\to\infty} d(x_n, y^*)\le\limsup_{n\to\infty} d(x_n, y^*)\le sd(x^*,y^*).
    \]
    \label{lem:liminfsup}
\end{lemma}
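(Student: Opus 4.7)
The plan is to split the double inequality into its two non-trivial halves (the middle inequality $\liminf \le \limsup$ being automatic) and prove each by a single application of the $b$-metric triangle inequality in the two possible ``orientations''. So I would handle the upper bound and the lower bound in parallel, and in each case the only analytic input beyond axiom (iii) is the fact that $x_n \to x^*$ in the $b$-metric sense means $d(x_n, x^*) \to 0$.

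For the upper bound, I would start from
\[
d(x_n, y^*) \le s\bigl[d(x_n, x^*) + d(x^*, y^*)\bigr],
\]
take $\limsup$ on both sides, and use $d(x_n, x^*) \to 0$ together with the fact that $s\, d(x^*, y^*)$ is a constant, giving $\limsup_{n\to\infty} d(x_n, y^*) \le s\, d(x^*, y^*)$.

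For the lower bound, I would exchange the roles and start from
\[
d(x^*, y^*) \le s\bigl[d(x^*, x_n) + d(x_n, y^*)\bigr],
\]
rearrange to
\[
\tfrac{1}{s}\, d(x^*, y^*) - d(x_n, x^*) \le d(x_n, y^*),
\]
and then take $\liminf$; since $d(x_n, x^*) \to 0$, the left-hand side tends to $\tfrac{1}{s}\, d(x^*, y^*)$, yielding $\tfrac{1}{s}\, d(x^*, y^*) \le \liminf_{n\to\infty} d(x_n, y^*)$.

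I do not foresee any real obstacle here: the whole argument is a symmetric double application of (iii) combined with the elementary $\liminf$/$\limsup$ facts that adding a convergent sequence commutes with these limit operations. The only point to keep in mind is that we never need to assume $d(x_n, y^*)$ is bounded or convergent — both $\liminf$ and $\limsup$ are taken in $[0, +\infty]$ and the inequalities proved above force them to be finite and sandwiched as claimed.
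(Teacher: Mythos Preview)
Your proof is correct and matches the paper's approach exactly: the paper applies the relaxed triangle inequality twice to obtain the double inequality $\tfrac{1}{s}d(x^*,y^*) - d(x_n,x^*) \le d(x_n,y^*) \le s\,d(x^*,y^*) + s\,d(x_n,x^*)$ and then takes $\liminf$ on the left and $\limsup$ on the right, which is precisely what you do.
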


\begin{proof}
	  If we apply twice the relaxed triangle inequality, we get for every $n\in \mathbb{N}$
    \[
    \frac{1}{s}d(x^*,y^*) - d(x_n, x^*)\le d(x_n, y^*)\le sd(x^*,y^*) + sd(x_n, x^*).
    \]
    If we take $\liminf$ on the left-hand side inequality and $\limsup$ on the right-hand side inequality, we obtain the desired property.
\end{proof}

\begin{dfn} Let $(X,d)$ be a complete $b$-metric space with constant $s\ge1$ and $T\colon X\to X$ an operator. We say that $T$ is \emph{a Picard-operator} if $T$ has a unique fixed point $x^*$ and $T^nx\to x^*$ for all $x\in X$.
\end{dfn}

The following definition is essentially that of comparison functions found in the literature (see ex. \cite{boyd_wong_1969,browder_1968,matkowski_1975,rus_1982}).
\begin{dfn} 
	Let $\Phi$ be the set of those functions $\varphi\colon(0,\infty)\to(0,\infty)$ that satisfy the following two conditions:
	\begin{itemize}
		\item [(i)] $\varphi$ is non-decreasing;
		\item [(ii)]  $\lim_{n\to\infty} \varphi^n(t)=0$ for every $t\in(0,\infty)$, where $\varphi^n$ means the $n$-fold composition of $\varphi$ with itself.
	\end{itemize}
\end{dfn}

\begin{dfn}
	We define two sets of functions $G\colon (0,\infty)\to(0,\infty)$ as follows.
	\begin{itemize}
		\item [(i)] $G\in\mathbb{G}_1$ if and only if $\inf G>0$.
		\item [(ii)] $G\in\mathbb{G}_2$ if and only if for every sequence $(\alpha_n)_{n\in\mathbb{N}}\subseteq(0,\infty)$ we have
		\[
			\lim_{n\to\infty} G(\alpha_n)=0\iff \lim_{n\to\infty}\alpha_n=0;
		\]
	\end{itemize}
\end{dfn}

\begin{dfn}
	Let $(X,d)$ be a $b$-metric space with constant $s\ge1$, and $G,\varphi\colon(0,\infty)\to(0,\infty)$ be two functions. We define the set $\mathbb{T}(X,G,\varphi)$ to consist of those operators $T\colon X\to X$ that for all $x,y\in X$ satisfy the following property:
	\begin{itemize}
		\item [(G)] $d(Tx, Ty)\ne 0 \mbox{ implies }
    G(d(Tx, Ty))\le \varphi (G(d(x,y))).$
	\end{itemize}
\end{dfn}

\section{Main result and implications}
Our main result is the following.
\begin{theorem} Let $(X,d)$ be a complete $b$-metric space with constant $s\ge1$. Suppose that there exists a $G\in\mathbb{G}_1\cup\mathbb{G}_2$ and $\varphi\in\Phi$ such that $T\in\mathbb{T}(X,G,\varphi)$. Then $T$ is a Picard-operator.
\label{thm:main}
\end{theorem}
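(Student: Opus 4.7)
The plan is to follow the classical Picard-iteration scheme. Fix $x_0\in X$ and set $x_n:=T^nx_0$. If $x_{n+1}=x_n$ for some $n$, then $x_n$ is a fixed point, so assume $d(x_n,x_{n+1})>0$ for every $n$. Iterating (G) along the orbit gives
\[
G(d(x_n,x_{n+1}))\le \varphi^n\bigl(G(d(x_0,x_1))\bigr),
\]
which tends to $0$ by the defining property of $\Phi$. Before splitting into cases I would record two useful observations: since $\varphi$ is non-decreasing, the convergence $\varphi^n(t)\to 0$ forces $\varphi(t)<t$ for every $t>0$ (otherwise the iterates would stay bounded below by $t$); and the $\mathbb{G}_2$ axiom is equivalent to the conjunction $\lim_{t\to 0^+}G(t)=0$ together with $\inf_{t\ge\delta}G(t)>0$ for every $\delta>0$.

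The case $G\in\mathbb{G}_1$ is then immediate: the bound $G(d(x_n,x_{n+1}))\ge\inf G>0$ is incompatible with $G(d(x_n,x_{n+1}))\to 0$, so the assumption $d(x_n,x_{n+1})>0$ must fail at some index, and a fixed point is reached in finitely many iterations. For $G\in\mathbb{G}_2$, the reformulation of the axiom yields $d(x_n,x_{n+1})\to 0$, and the technical core becomes showing that $(x_n)$ is Cauchy.

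I would argue Cauchyness by contradiction. If $(x_n)$ fails to be Cauchy, a standard extraction produces $\epsilon>0$ and subsequences $n_k<m_k$ (with $m_k$ chosen minimal) such that $d(x_{n_k},x_{m_k})\ge\epsilon$ and $d(x_{n_k},x_{m_k-1})<\epsilon$. Combining this with the relaxed triangle inequality and $d(x_n,x_{n+1})\to 0$ supplies two-sided bounds on $d(x_{n_k+j},x_{m_k+j})$ for each fixed $j$, all strictly positive for $k$ large. Iterating (G) forward gives
\[
G(d(x_{n_k+j},x_{m_k+j}))\le\varphi^j\bigl(G(d(x_{n_k},x_{m_k}))\bigr),
\]
and combining the $\mathbb{G}_2$-based lower bound $G(d(x_{n_k+j},x_{m_k+j}))\ge\inf_{t\ge\eta_j}G(t)>0$ with $\varphi^j\to 0$ on the right produces the desired contradiction.

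Once Cauchyness is secured, completeness supplies a limit $x^*$ of the iterates. To verify $Tx^*=x^*$, I would apply (G) to the pair $(x_n,x^*)$ (passing to the subsequence where $Tx_n\ne Tx^*$ if necessary) and invoke Lemma~\ref{lem:liminfsup} to bypass the possible discontinuity of $d$ in the $b$-metric setting. Uniqueness is a one-line consequence of (G) together with $\varphi(t)<t$: two distinct fixed points $x,y$ would satisfy $G(d(x,y))\le\varphi(G(d(x,y)))<G(d(x,y))$. The hardest step will be the Cauchy argument, since $G$ is not assumed monotone or continuous, and the relaxed triangle inequality introduces $s$-factors that must be absorbed carefully when passing between $d(x_{n_k},x_{m_k})$ and $d(x_{n_k+j},x_{m_k+j})$.
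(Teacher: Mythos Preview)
Your handling of the $\mathbb{G}_1$ case, of uniqueness, and of the passage from the Cauchy limit to a fixed point all align with the paper. The gap is in the Cauchy argument for $G\in\mathbb{G}_2$. Your contradiction scheme needs $\varphi^j\bigl(G(d(x_{n_k},x_{m_k}))\bigr)$ to drop below the positive number $\inf_{t\ge\eta_j}G(t)$. For this you implicitly require a uniform (in $k$) upper bound $M$ on $G(d(x_{n_k},x_{m_k}))$, so that you are iterating $\varphi$ on a fixed value; but $d(x_{n_k},x_{m_k})\ge\epsilon$, and the $\mathbb{G}_2$ axiom gives no ceiling on $G$ over sets bounded away from~$0$ --- $G$ may well be unbounded on $[\epsilon,2s\epsilon]$. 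A second obstruction appears when $s>1$: the reverse $b$-triangle inequality only yields $d(x_{n_k+j},x_{m_k+j})\ge s^{-2j}\epsilon-o_k(1)$, so your $\eta_j\to 0$ and with it $\inf_{t\ge\eta_j}G(t)\to 0$; even granting a bound $M$, nothing forces $\varphi^j(M)$ to win this race.

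The paper avoids both issues by exploiting the only upper bound $\mathbb{G}_2$ actually supplies, namely $\sup_{\alpha<\varepsilon}G(\alpha)\le 1$ for all small $\varepsilon$. This forces one to work with pairs that are \emph{close} rather than $\epsilon$-separated: from $G(d(x,y))\le 1$ one gets, for a suitably large $n$, that $d(x,y)<\varepsilon$ implies $G(d(T^nx,T^ny))\le\varphi^n(1)<\inf_{\alpha\ge\varepsilon/(2s)}G(\alpha)$, hence $d(T^nx,T^ny)<\varepsilon/(2s)$. Combined with $d(T^n x_{mn},x_{mn})\to 0$, this yields an invariant ball $T^n\bigl(B(x_{mn},\varepsilon)\bigr)\subseteq B(x_{mn},\varepsilon)$, and Cauchyness follows by trapping the orbit there (with a short consecutive-step estimate to handle indices not divisible by $n$). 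Your subsequence-contradiction template does not visibly adapt to tap into this near-zero control on $G$.
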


We will prove this theorem in the next section. Before that we investigate its relations to the existing results in the literature. In order to do this we state an equivalent version of Theorem \ref{thm:main} in terms of $F$-contractions.

\begin{theorem}
	Let  $\psi\colon\mathbb{R}\to\mathbb{R}$ be  non-decreasing such that  $\lim_{n\to\infty} \psi^n(t)=-\infty$ for every $t\in(0,\infty)$  and $F\colon (0,\infty)\to \mathbb{R}$ be a function that satisfies one of the following two conditions:
	\begin{itemize}
		\item [(i)] $\inf F>-\infty$;
		\item [(ii)] for all $(\alpha_n)\subseteq(0,\infty)$ we have
		$
			\lim_{n\to\infty} F(\alpha_n)=-\infty$ iff $\lim_{n\to\infty}\alpha_n=0;
		$
	\end{itemize}
	Suppose that $(X,d)$ is a complete $b$-metric space with constant $s\ge1$ and $T\colon X\to X$ an operator such that for all $x,y\in X$
	\[ d(Tx,Ty)\ne 0 \mbox{ implies }F(d(Tx,Ty))\le \psi(F(d(x,y))).\]
	Then $T$ is a Picard-operator.
	\label{thm:main2}
\end{theorem}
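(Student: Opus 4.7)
The plan is to derive Theorem \ref{thm:main2} from Theorem \ref{thm:main} via an exponential change of variable, which bijectively transports the data $(F,\psi)$ of Theorem \ref{thm:main2} onto admissible data $(G,\varphi)$ for Theorem \ref{thm:main}. Concretely, I would set $h\colon\mathbb{R}\to(0,\infty)$, $h(t)=e^t$, and define
\[
    G(x) := e^{F(x)} \quad (x\in(0,\infty)), \qquad \varphi(u) := e^{\psi(\log u)} \quad (u\in(0,\infty)).
\]
Since $h$ is a strictly increasing bijection with $h(-\infty)=0$, the contractive inequality $F(d(Tx,Ty))\le\psi(F(d(x,y)))$ is equivalent, after applying $h$ to both sides, to $G(d(Tx,Ty))\le\varphi(G(d(x,y)))$. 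Hence $T\in\mathbb{T}(X,G,\varphi)$.

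Next I would verify that $G$ lies in $\mathbb{G}_1\cup\mathbb{G}_2$ and $\varphi\in\Phi$. For $G$: if $F$ satisfies (i), then $F\ge-M$ for some $M$, so $G\ge e^{-M}>0$ and $G\in\mathbb{G}_1$; if $F$ satisfies (ii), then $F(\alpha_n)\to-\infty\iff\alpha_n\to 0$ becomes $G(\alpha_n)\to 0\iff\alpha_n\to 0$, i.e.\ $G\in\mathbb{G}_2$. For $\varphi$: monotonicity is immediate as a composition of non-decreasing maps. For the iterated-limit condition, observe that $\varphi^n = h\circ\psi^n\circ h^{-1}$, so $\varphi^n(t)=e^{\psi^n(\log t)}$ for $t\in(0,\infty)$.

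The one point that is not entirely trivial is that $\log t$ ranges over all of $\mathbb{R}$, whereas the hypothesis on $\psi$ only gives $\psi^n(t)\to-\infty$ for $t\in(0,\infty)$. I would handle this by a short monotonicity argument: for any $s\in\mathbb{R}$ pick $t\in(0,\infty)$ with $t\ge s$; since $\psi$ is non-decreasing, $\psi^n(s)\le\psi^n(t)\to-\infty$, so $\psi^n(s)\to-\infty$ for every real $s$. Therefore $\varphi^n(t)=e^{\psi^n(\log t)}\to 0$ for every $t\in(0,\infty)$, which gives $\varphi\in\Phi$.

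With all hypotheses of Theorem \ref{thm:main} verified for $(X,d,G,\varphi,T)$, the conclusion that $T$ is a Picard operator follows directly. The main (and essentially only) subtle point in this reduction is the extension of the limit condition on $\psi$ from $(0,\infty)$ to all of $\mathbb{R}$ described above; everything else is a direct translation through the bijection $h=\exp$. A brief remark that the reverse implication (Theorem \ref{thm:main}$\Rightarrow$Theorem \ref{thm:main2} and back via $F=\log G$, $\psi(t)=\log\varphi(e^t)$) yields the claimed equivalence would complete the picture.
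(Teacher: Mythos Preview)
Your proposal is correct and follows exactly the route the paper takes: the paper establishes Theorem~\ref{thm:main2} simply by asserting that it is equivalent to Theorem~\ref{thm:main} via the substitution $G=e^{F}$ and $\varphi=\exp\circ\psi\circ\ln$. Your write-up is in fact more careful than the paper's one-line justification, in that you explicitly verify $G\in\mathbb{G}_1\cup\mathbb{G}_2$, $\varphi\in\Phi$, and---most notably---you address the domain mismatch (the hypothesis $\psi^n(t)\to-\infty$ is stated only for $t\in(0,\infty)$ while $\log t$ ranges over all of $\mathbb{R}$) via the monotonicity argument, a point the paper passes over in silence.
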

Indeed, one can see that Theorem \ref{thm:main} is equivalent to Theorem \ref{thm:main2} by taking $G=e^F$ and $\varphi=\exp\circ\psi\circ\ln$.

\begin{remark}
	Theorem \ref{thm:main2} generalises Theorem 3.1 and Theorem 3.3 in \cite{secelean_wardowski_2016} by not requiring the boundedness conditions imposed on both $F$ and $T$ and by relaxing condition (F2).
\end{remark}
\begin{remark}
	By taking $\psi(t)=t-\tau$, for some $\tau>0$ in Theorem \ref{thm:main2}, we obtain a generalization of Theorem 2.1 in \cite{wardowski_fcontr_2012} by omitting conditions (F1) and (F3) on the function $F$ and by relaxing condition (F2).
\end{remark}

The following Corollary deals with the special case when $G$ is increasing.

\begin{corollary}  
	Let $(X,d)$ be a complete $b$-metric space with constant $s\ge1$, $G\colon (0,\infty)\to(0,\infty)$ be strictly increasing, $\varphi\in\Phi$ and $T\in\mathbb{T}(X,G,\varphi)$. Then $T$ is a Picard-operator.
	\label{cor:increasing}
\end{corollary}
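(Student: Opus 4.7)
The plan is to reduce Corollary \ref{cor:increasing} to Theorem \ref{thm:main} by showing that any strictly increasing $G\colon(0,\infty)\to(0,\infty)$ automatically belongs to $\mathbb{G}_1\cup\mathbb{G}_2$. This is a purely real-analytic dichotomy based on the value of $\inf G$.

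First I would split into two cases according to whether $\inf G>0$ or $\inf G=0$. The first case is immediate: by definition $G\in\mathbb{G}_1$, so Theorem \ref{thm:main} applies directly and $T$ is a Picard-operator. The nontrivial case is $\inf G=0$; here the goal is to verify that $G\in\mathbb{G}_2$, i.e.\ that $G(\alpha_n)\to 0$ if and only if $\alpha_n\to 0$ for every sequence $(\alpha_n)\subseteq(0,\infty)$.

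For the forward-type direction (``$\alpha_n\to 0\Rightarrow G(\alpha_n)\to 0$''), I would use $\inf G=0$ to pick, for any $\varepsilon>0$, some $t_0>0$ with $G(t_0)<\varepsilon$; then for $n$ large enough $\alpha_n<t_0$, and monotonicity yields $0<G(\alpha_n)\le G(t_0)<\varepsilon$. For the reverse direction I would argue by contradiction: if $\alpha_n\not\to 0$, some subsequence is bounded below by $\delta>0$, and monotonicity gives $G(\alpha_{n_k})\ge G(\delta)>0$, contradicting $G(\alpha_n)\to 0$. Hence $G\in\mathbb{G}_2$, and Theorem \ref{thm:main} again delivers the conclusion.

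I do not anticipate any real obstacle: the only slightly delicate point is recognising that in the case $\inf G=0$ the $\mathbb{G}_2$-condition follows for free from monotonicity together with positivity, without needing the ``strictly'' part of the hypothesis. Strict increase of $G$ is therefore not actually used beyond non-decreasingness; the corollary is essentially a structural observation that the two classes $\mathbb{G}_1$ and $\mathbb{G}_2$ together cover all (non-decreasing, positive) $G$.
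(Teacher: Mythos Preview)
Your proposal is correct and follows essentially the same route as the paper: split on whether $\inf G>0$ (giving $G\in\mathbb{G}_1$) or $\inf G=0$ (giving $G\in\mathbb{G}_2$), then invoke Theorem~\ref{thm:main}. The only minor difference is in the implication $G(\alpha_n)\to 0\Rightarrow\alpha_n\to 0$: the paper argues directly via $G(\alpha_n)<G(\varepsilon)\Rightarrow\alpha_n<\varepsilon$, which genuinely uses strict monotonicity, whereas your contraposition (a subsequence bounded below by $\delta$ forces $G(\alpha_{n_k})\ge G(\delta)>0$) needs only that $G$ is non-decreasing and positive---so your closing remark that strictness is not essential is justified by your version of the argument but not by the paper's.
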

\begin{proof}
	If $G$ is strictly increasing then for any sequence $(\alpha_n)\subseteq(0,\infty)$ we have
	\[
		\lim_{n\to\infty} G(\alpha_n)=0\mbox{ implies } \lim_{n\to\infty}\alpha_n=0.
	\]
	Indeed, let $\varepsilon>0$. It follows that there exists $n_\varepsilon\in\mathbb{N}$ such that $G(\alpha_n)<G(\varepsilon)$, for all $n\ge n_\varepsilon$, whence $\alpha_n<\varepsilon$, for all $n\ge n_\varepsilon$, thus $\lim_{n\to\infty}\alpha_n=0.$ 

	Now if $\inf G=0$ then, since $G$ is strictly increasing, we have
	\[
		\lim_{n\to\infty}\alpha_n=0\mbox{ implies } \lim_{n\to\infty} G(\alpha_n)=0,
	\]
	thus $G\in\mathbb{G}_1\cup\mathbb{G}_2$. Therefore the proof follows from Theorem \ref{thm:main}.
\end{proof}
\begin{remark}
	If we reformulate Corollary \ref{cor:increasing} in terms of $F=\ln G$ and $\psi(t)=\ln(\varphi(e^t))=t-\tau$ for some $\tau>0$, we obtain a generalization of Theorem 2.1 in \cite{wardowski_fcontr_2012} by omitting conditions (F2) and (F3). 
\end{remark}

\begin{remark}
    In the particular case when $G$ is increasing and invertible and $\varphi\in\Phi$ then
    $(G^{-1}\circ\varphi\circ G)\in\Phi$. Furthermore, the contractive condition in Theorem \ref{thm:main} is equivalent to
   \[
	d(Tx, Ty)\ne 0 \mbox{ implies }
    d(Tx, Ty)\le (G^{-1}\circ \varphi\circ  G)(d(x,y))
    \]
    for all $x,y\in X$. Thus Theorem \ref{thm:main} also generalises Theorem 1 in \cite{czerwik_1993}.
\end{remark}

\section{Proof of Theorem \ref{thm:main}}
For the sake of readability we extracted the distinguishable parts of the proof into a number of Lemmas.
\begin{lemma} If $\varphi\in\Phi$ then $\varphi(t)< t$, for all $t\in(0,\infty)$.
\label{lem:let}
\end{lemma}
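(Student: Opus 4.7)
The plan is to argue by contradiction. Suppose there exists some $t_0 \in (0,\infty)$ with $\varphi(t_0) \ge t_0$. I will iterate $\varphi$ starting from $t_0$ and show that the sequence $\varphi^n(t_0)$ cannot tend to $0$, contradicting property (ii) in the definition of $\Phi$.

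The key observation is that since $\varphi$ takes values in $(0,\infty)$, every iterate $\varphi^n(t_0)$ lies in the domain of $\varphi$, so the composition is well-defined. Using the non-decreasing property (i) and the assumed inequality $\varphi(t_0) \ge t_0$, a straightforward induction yields $\varphi^{n+1}(t_0) = \varphi(\varphi^n(t_0)) \ge \varphi(t_0) \ge t_0$ for every $n \in \mathbb{N}$, because once $\varphi^n(t_0) \ge t_0$ holds, monotonicity of $\varphi$ gives $\varphi^{n+1}(t_0) \ge \varphi(t_0) \ge t_0$.

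Consequently $\varphi^n(t_0) \ge t_0 > 0$ for all $n$, which is incompatible with $\lim_{n\to\infty}\varphi^n(t_0)=0$. This contradicts property (ii), so no such $t_0$ exists and $\varphi(t)<t$ for every $t \in (0,\infty)$.

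There is no real obstacle here; the only minor point to verify is that the monotonicity assumption (i) suffices (strict monotonicity is not needed) and that the codomain being $(0,\infty)$ is what makes the iteration legitimate.
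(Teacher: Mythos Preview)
Your proof is correct and follows essentially the same argument as the paper: assume $\varphi(t_0)\ge t_0$, use monotonicity to get $\varphi^n(t_0)\ge t_0$ for all $n$, and contradict $\lim_{n\to\infty}\varphi^n(t_0)=0$. The paper phrases the induction as the chain $\varphi^n(t)\ge\varphi^{n-1}(t)\ge\dots\ge\varphi(t)\ge t$, but the content is identical.
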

\begin{proof}
	Suppose that there exists $t\in(0,\infty)$ such that $\varphi(t)\ge t$. Since $\varphi$ is non-decreasing we have
	\[\varphi^n(t)\ge\varphi^{n-1}(t)\ge\dots\ge\varphi(t)\ge t,\]
	for all $n\in\mathbb{N}$. Taking the limit when $n\to\infty$ implies
	\[0=\lim_{n\to\infty}\varphi^n(t)\ge t,\]
	which is a contradiction.
\end{proof}
First we focus on the case when $G\in \mathbb{G}_1$. In that case Theorem \ref{thm:main} is equivalent to the following Lemma.

\begin{lemma}Let $(X,d)$ be complete a $b$-metric space with constant $s\ge1$, $G\in\mathbb{G}_1$, $\varphi\in\Phi$ and $T\in\mathbb{T}(X,G,\varphi)$. Then $T$ is a Picard-operator.
\label{lem:g1}
\end{lemma}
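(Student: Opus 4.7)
The plan is to exploit the clash between $\inf G > 0$ and the fact (given by the definition of $\Phi$) that $\varphi^n(t) \to 0$ for all $t > 0$. Together with Lemma \ref{lem:let}, this forces every Picard iteration to stabilise after finitely many steps, which makes the argument very short in the $\mathbb{G}_1$ case.

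Fix $x_0 \in X$ arbitrarily and set $x_n := T^n x_0$. I would argue by dichotomy. If $x_{n_0} = x_{n_0+1}$ for some $n_0$, then $x_{n_0}$ is already a fixed point of $T$ and the orbit is eventually constant, hence convergent. Suppose instead that $d(x_n, x_{n+1}) > 0$ for every $n$. Then the contractive property (G) applies to the pair $(x_n, x_{n+1})$, yielding
\[
G(d(x_{n+1}, x_{n+2})) \;=\; G(d(Tx_n, Tx_{n+1})) \;\le\; \varphi\bigl(G(d(x_n, x_{n+1}))\bigr).
\]
Iterating and using that $\varphi$ is non-decreasing, a straightforward induction gives $G(d(x_n, x_{n+1})) \le \varphi^n(G(d(x_0, x_1)))$ for all $n$. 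Letting $n \to \infty$ the right-hand side tends to $0$ by property (ii) of $\Phi$, whereas the left-hand side is bounded below by $\inf G > 0$, a contradiction. Hence the first alternative must occur: the iteration reaches a fixed point in finitely many steps, which I will call $x^*$, and $T^n x_0 \to x^*$.

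For uniqueness, suppose $x^*$ and $y^*$ are two fixed points with $x^* \ne y^*$. Then $d(Tx^*, Ty^*) = d(x^*, y^*) > 0$, so (G) and Lemma \ref{lem:let} yield
\[
G(d(x^*, y^*)) \;\le\; \varphi\bigl(G(d(x^*, y^*))\bigr) \;<\; G(d(x^*, y^*)),
\]
a contradiction. Combining with the convergence obtained above (applied to an arbitrary starting point), $T$ is a Picard-operator.

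The argument uses neither completeness nor Lemma \ref{lem:liminfsup}, which is the real reason the $\mathbb{G}_1$ case is essentially trivial: the boundedness-from-below of $G$ kills the iteration in finite time, so there is no genuine Cauchy-sequence argument to run and no obstacle of substance. The serious work will be deferred to the $\mathbb{G}_2$ case, where one must actually estimate $d(x_n, x_m)$ carefully to establish the Cauchy property in a $b$-metric space.
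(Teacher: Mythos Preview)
Your proof is correct and follows essentially the same approach as the paper: both argue that $G(d(T^n x, T^{n+1}x)) \le \varphi^n(G(d(x,Tx))) \to 0$ contradicts $\inf G > 0$, forcing every orbit to stabilise in finitely many steps, and both handle uniqueness via Lemma~\ref{lem:let}. The only cosmetic difference is the order in which existence and uniqueness are presented; your closing remark that completeness is not actually needed in this case is accurate and worth keeping.
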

\begin{proof}
	First we show that if there exists a fixed point of $T$, then it is unique. Indeed, if $x^*$ and $y^*\in X$ are such that $x^* = Tx^* \ne T y^*=y^*$, then
	\[
        0<G(d(x^*, y^*)) = G(d(Tx^*, Ty^*))\le \varphi(G(d(x^*, y^*))),
    \]
    which by Lemma \ref{lem:let} is a contradiction. 

	Now suppose that there exists an $x\in X$ such that $T^nx\ne T^{n+1}x$ for all $n\in\mathbb{N}$. Then
	\[ 
		0<G(d(T^nx,T^{n+1}x))\le \varphi^n(G(d(x,Tx))),
	\]
	for all $n\in\mathbb{N}$. Hence
	\[ 
		0\le\inf_n G(d(T^nx,T^{n+1}x))\le \inf_n \varphi^n(G(d(x,Tx)))=0,
	\]
	which contradicts $\inf G>0$. Consequently for every $x\in X$ there exists $n_0\in\mathbb{N}$ such that $T^{n_0+1}x=T^{n_0} x$, therefore $T^{n_0}x$ is a fixed-point of $T$.

\end{proof}

Now we deal with the case when $G\in\mathbb{G}_2$.
\begin{lemma}
	Suppose that $G\in\mathbb{G}_2$. Then there exists $\varepsilon_0>0$ such that for all $\varepsilon\in(0,\varepsilon_0)$ we have
	\begin{itemize}
		\item [(i)] $\sup_{\alpha<\varepsilon}G(\alpha)\le1$;
		\item [(ii)] $\inf_{\alpha\ge\varepsilon}G(\alpha)>0$.
	\end{itemize}
	\label{lem:infsup}
\end{lemma}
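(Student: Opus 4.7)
The plan is to handle (i) and (ii) separately, in each case arguing by contradiction and extracting a sequence that violates one of the two implications in the definition of $\mathbb{G}_2$.

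For part (ii), I expect the argument to give the stronger (and simpler) fact that $\inf_{\alpha \ge \varepsilon} G(\alpha) > 0$ for \emph{every} $\varepsilon > 0$, with no smallness restriction. Indeed, if this infimum vanished for some $\varepsilon > 0$, one could select $\alpha_n \in [\varepsilon,\infty)$ with $G(\alpha_n) \to 0$; the $\mathbb{G}_2$ condition then forces $\alpha_n \to 0$, contradicting $\alpha_n \ge \varepsilon > 0$.

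For part (i) I will argue again by contradiction: if no threshold $\varepsilon_0$ worked, then for each $n \in \mathbb{N}$ one could pick $\alpha_n \in (0, 1/n)$ with $G(\alpha_n) > 1$. The sequence $(\alpha_n)$ satisfies $\alpha_n \to 0$ while $G(\alpha_n) \not\to 0$, again contradicting $G \in \mathbb{G}_2$. So some $\varepsilon_0 > 0$ exists with $\sup_{\alpha < \varepsilon_0} G(\alpha) \le 1$, and since the set $\{\alpha : \alpha < \varepsilon\}$ shrinks as $\varepsilon$ decreases, the inequality in (i) persists for every $\varepsilon \in (0, \varepsilon_0)$.

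Combining the two yields the lemma: the $\varepsilon_0$ supplied by (i) works for both conclusions, using that (ii) is in fact unconditional on $\varepsilon$. I do not anticipate a genuine obstacle here; the proof is essentially a direct unpacking of the sequential characterization in the definition of $\mathbb{G}_2$, and the only minor subtlety is noting that (i) is the pinch point that determines how small $\varepsilon_0$ must be taken.
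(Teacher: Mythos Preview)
Your proposal is correct and mirrors the paper's own argument almost exactly: the paper also derives (i) from the implication $\alpha_n\to 0\Rightarrow G(\alpha_n)\to 0$ (your contradiction with $\alpha_n\in(0,1/n)$ just makes this step explicit) and proves (ii) for every $\varepsilon>0$ by the same contradiction you describe. Your observation that (ii) is unconditional on $\varepsilon$ and that (i) alone determines $\varepsilon_0$ matches the paper's logic precisely.
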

\begin{proof}
	Since $\lim_{n\to\infty}\alpha_n=0$ implies $\lim_{n\to\infty}G(\alpha_n)=0$ for every sequence $(\alpha_n)_{n\in\mathbb{N}}$, there exists an $\varepsilon_0>0$ such that $\alpha<\varepsilon_0$ implies $G(\alpha)\le1$. Hence if $\varepsilon\in(0,\varepsilon_0)$ then
	\[\sup_{\alpha<\varepsilon}G(\alpha)\le\sup_{\alpha<\varepsilon_0}G(\alpha)\le1.\]
	Suppose that $\inf_{\alpha>\varepsilon}G(\alpha)=0$. It follows that there exists a sequence $(\alpha_n)\subseteq(\varepsilon,\infty)$ such that $\lim_{n\to\infty}G(\alpha_n)=0$. Since $G\in\mathbb{G}_2$, this implies $\lim_{n\to\infty}\alpha_n=0$, which is a contradiction.
\end{proof}

\begin{lemma} Let $G\in \mathbb{G}_2$, $\varphi\in\Phi$, $T\in\mathbb{T}(X,G,\varphi)$ and construct $\varepsilon_0$ according to Lemma \ref{lem:infsup}. Then for all $\varepsilon\in(0,\varepsilon_0)$ there exists $n_\varepsilon\in\mathbb{N}$ such that 
\[d(x,y)<\varepsilon\mbox{ implies } d(T^nx,T^ny)<\frac{\varepsilon}{2s},\]
for all $n\ge n_\varepsilon$.
	\label{lem:neps}
\end{lemma}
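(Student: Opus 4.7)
The plan is to combine the two parts of Lemma \ref{lem:infsup} with the fact that $\varphi^n(1)\to 0$, which holds for every $\varphi\in\Phi$. Fix $\varepsilon\in(0,\varepsilon_0)$. Since $s\ge 1$, the smaller threshold $\varepsilon/(2s)$ also lies in $(0,\varepsilon_0)$, so by Lemma \ref{lem:infsup}(ii) the quantity
\[c:=\inf_{\alpha\ge\varepsilon/(2s)}G(\alpha)\]
is strictly positive. Using $\varphi^n(1)\to 0$, I would then choose $n_\varepsilon\in\mathbb{N}$ so that $\varphi^n(1)<c$ for every $n\ge n_\varepsilon$.

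Now take any $x,y\in X$ with $d(x,y)<\varepsilon$ and any $n\ge n_\varepsilon$. If $T^k x=T^k y$ for some $k\le n$, then $T^n x=T^n y$ and the required bound is trivial, so I may assume all iterates remain distinct. A straightforward induction using condition (G) and the monotonicity of $\varphi$ then yields
\[G(d(T^n x,T^n y))\le\varphi^n(G(d(x,y))).\]
By Lemma \ref{lem:infsup}(i) we have $G(d(x,y))\le 1$, so the non-decreasing character of $\varphi^n$ gives $\varphi^n(G(d(x,y)))\le\varphi^n(1)<c$. Combining these inequalities,
\[G(d(T^n x,T^n y))<c=\inf_{\alpha\ge\varepsilon/(2s)}G(\alpha),\]
which forces $d(T^n x,T^n y)<\varepsilon/(2s)$, since otherwise $d(T^n x,T^n y)$ would lie in $[\varepsilon/(2s),\infty)$ and $G$ evaluated at it would be at least $c$.

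I do not anticipate any serious obstacle here. The argument is a clean synthesis of three ingredients: Lemma \ref{lem:infsup}(i) lets us majorise $G(d(x,y))$ by the convenient constant $1$, Lemma \ref{lem:infsup}(ii) converts the smallness of $G(d(T^n x,T^n y))$ back into smallness of the distance, and $\varphi^n(1)\to 0$ drives the iterates together uniformly over all pairs $(x,y)$ with $d(x,y)<\varepsilon$. The only minor bookkeeping is that the inductive use of (G) requires consecutive iterates to be distinct, but this propagates backwards from the non-triviality of $d(T^n x, T^n y)$, so the case split described above suffices.
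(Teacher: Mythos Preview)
Your argument is correct and essentially identical to the paper's own proof: both combine Lemma~\ref{lem:infsup}(i) to bound $G(d(x,y))\le 1$, iterate condition~(G) to get $G(d(T^nx,T^ny))\le\varphi^n(G(d(x,y)))\le\varphi^n(1)$, and then use Lemma~\ref{lem:infsup}(ii) together with $\varphi^n(1)\to 0$ to force $d(T^nx,T^ny)<\varepsilon/(2s)$. Your handling of the case split (noting that $T^kx=T^ky$ for some $k\le n$ propagates forward) is slightly more explicit than the paper's, which simply distinguishes $d(T^nx,T^ny)=0$ from $\ne 0$, but the content is the same.
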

\begin{proof}
	By Lemma \ref{lem:infsup} we know that $\sup_{\alpha<\varepsilon}G(\alpha)\le1$.
	Let $n_\varepsilon\in\mathbb{N}$ be such that 
	\[
		\inf_{\alpha>\frac{\varepsilon}{2s}}G(\alpha)>\varphi^{n_\varepsilon}(1).
	\] 
	For any $n\ge n_\varepsilon$ either we have $d(T^nx,T^ny)=0$ or the following chain of inequalities holds:
	\[G(d(T^nx,T^ny))\le \varphi^n(G(d(x,y)))\le \varphi^{n_\varepsilon}\left(\sup_{\alpha<\varepsilon}G(\alpha)\right)<\inf_{\alpha>\frac{\varepsilon}{2s}}G(\alpha).\]
	Hence in both cases we have $d(T^nx,T^ny)<\frac{\varepsilon}{2s}$, which finishes the proof.
\end{proof}

\begin{lemma}
	Let $G\in\mathbb{G}_2$, $\varphi\in\Phi$ and $T\in\mathbb{T}(X,G,\varphi)$. Then for all $\varepsilon>0$ and $n\in\mathbb{N}^*$ there exists $m_\varepsilon$ such that
	\[d(T^n x_{mn},x_{mn})<\frac{\varepsilon}{2s},\]
	for all $m\ge m_\varepsilon$.
	\label{lem:meps}
\end{lemma}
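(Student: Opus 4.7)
I read the sequence $(x_k)_{k\in\mathbb{N}}$ implicit in the statement as the Picard iterates $x_k := T^k x$ for some fixed $x \in X$, so that $T^n x_{mn} = x_{(m+1)n}$ and the claim becomes: for each fixed step-size $n\in\mathbb{N}^*$, the displacement $d(x_{(m+1)n}, x_{mn})$ along the orbit tends to $0$ as $m \to \infty$.

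\emph{Step 1: consecutive iterates approach.} If $x_{k_0} = x_{k_0+1}$ for some $k_0$, the orbit is eventually constant and the conclusion is immediate. Otherwise $d(x_k, x_{k+1}) > 0$ for every $k$, so iterating condition (G) along the orbit yields
\[
G(d(x_k, x_{k+1})) \le \varphi\bigl(G(d(x_{k-1}, x_k))\bigr) \le \cdots \le \varphi^k\bigl(G(d(x, Tx))\bigr).
\]
Since $\varphi \in \Phi$ the right-hand side tends to $0$, and because $G \in \mathbb{G}_2$ this forces $d(x_k, x_{k+1}) \to 0$.

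\emph{Step 2: telescoping in a $b$-metric.} An $(n-1)$-fold application of the relaxed triangle inequality gives a bound of the form
\[
d(y_0, y_n) \le C(s,n) \sum_{i=0}^{n-1} d(y_i, y_{i+1}),
\]
where the constant $C(s,n)$ depends only on $s$ and $n$. Applied to $y_i := x_{mn+i}$, it yields
\[
d(x_{mn}, x_{(m+1)n}) \le C(s,n) \sum_{i=0}^{n-1} d(x_{mn+i}, x_{mn+i+1}).
\]
Each summand on the right is a tail term of the sequence studied in Step 1, hence tends to $0$ as $m\to\infty$; since the number of summands and the coefficient $C(s,n)$ are fixed, the whole right-hand side vanishes, and a sufficiently large $m_\varepsilon$ makes it smaller than $\varepsilon/(2s)$.

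\emph{Main obstacle.} None of the steps is deep; the only care required is the book-keeping of the $b$-metric constant $s$ when iterating the triangle inequality across $n$ consecutive points (each application costs a factor of $s$, but $n$ is fixed), and the correct interpretation of the unlabelled sequence $x_{mn}$ as a Picard iterate of $T$ from some fixed starting point.
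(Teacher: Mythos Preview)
Your argument is correct, but it takes a detour that the paper avoids. The paper applies condition (G) directly to the pair $(x_0,\,T^n x_0)=(x_0,x_n)$: iterating $mn$ times (and handling the trivial case where some intermediate distance vanishes) gives
\[
G\bigl(d(T^n x_{mn},x_{mn})\bigr)\le \varphi^{mn}\bigl(G(d(T^n x_0,x_0))\bigr)\xrightarrow[m\to\infty]{}0,
\]
whence $d(T^n x_{mn},x_{mn})\to 0$ by $G\in\mathbb{G}_2$. No telescoping, no $b$-metric constant enters at all. Your route instead first establishes the $n=1$ case ($d(x_k,x_{k+1})\to 0$) and then bridges the $n$-step gap by an $(n-1)$-fold relaxed triangle inequality, picking up an auxiliary factor $C(s,n)$. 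That is perfectly valid---and your Step~1 is exactly the fact the paper later invokes without separate proof in the main argument---but for this lemma the direct application of (G) to a pair already $n$ apart is shorter and cleaner.
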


\begin{proof} 
	Let $n\in\mathbb{N}^*$. If there exists $m_0\in\mathbb{N}^*$ such that $d(T^n x_{m_0n},x_{m_0n})=0$ then for every $m\ge m_0$ we have $d(T^n x_{mn},x_{mn})=0$. Suppose that $d(T^n x_{mn},x_{mn})\ne0$ for all $m\in\mathbb{N}^*$. 
	Since $T\in\mathbb{T}(X,G,\varphi)$, we have
	\[
		0<G(d(T^n x_{mn},x_{mn}))\le \varphi^{mn}(G(d(T^n x_0,x_0))),
	\]
	for all $m,n\in\mathbb{N}^*$. 
	Let $m\to\infty$ to obtain $\lim_{m\to\infty}G(d(T^n x_{mn},x_{mn}))=0$, for all $n\in\mathbb{N}^*$. 
	Thus by $G\in\mathbb{G}_2$ we can conclude that $\lim_{m\to\infty}d(T^n x_{mn}, x_{mn})=0$, for all $n\in\mathbb{N}^*$. 	
\end{proof}

\begin{lemma} Let $G\in \mathbb{G}_2$, $T\in\mathbb{T}(X,G)$ and construct $\varepsilon_0$ according to Lemma \ref{lem:infsup}. Then for every $\varepsilon\in(0,\varepsilon_0)$ there exists $n=n_\varepsilon, m=m_\varepsilon\in\mathbb{N}^*$ such that
\[T^n(B(x_{mn},\varepsilon))\subseteq B(x_{mn},\varepsilon),\]
where $B(x_{mn},\varepsilon)=\{u\in X\mid d(u,x_{mn})<\varepsilon\}$.	
\label{lem:nmeps}
\end{lemma}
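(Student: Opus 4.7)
The plan is to combine Lemma~\ref{lem:neps} and Lemma~\ref{lem:meps} via the relaxed triangle inequality of the $b$-metric. Writing $x_k$ for $T^k x_0$ (with $x_0$ some fixed point of $X$), given $\varepsilon \in (0,\varepsilon_0)$ I first invoke Lemma~\ref{lem:neps} to extract $n := n_\varepsilon \in \mathbb{N}^*$ with the property that $d(x,y) < \varepsilon$ implies $d(T^n x, T^n y) < \varepsilon/(2s)$. I then freeze this $n$ and feed it into Lemma~\ref{lem:meps}, which delivers an index $m := m_\varepsilon$ such that $d(T^n x_{mn}, x_{mn}) < \varepsilon/(2s)$ for this particular $n$.

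With $n$ and $m$ in hand, I just need to check the inclusion $T^n(B(x_{mn},\varepsilon)) \subseteq B(x_{mn},\varepsilon)$. So take $u$ with $d(u, x_{mn}) < \varepsilon$. Applying the relaxed triangle inequality once and noting that $T^n x_{mn} = x_{mn+n}$, I get
\[
d(T^n u, x_{mn}) \le s\bigl[d(T^n u, T^n x_{mn}) + d(T^n x_{mn}, x_{mn})\bigr].
\]
By Lemma~\ref{lem:neps} applied to the pair $(u, x_{mn})$ the first summand is strictly less than $\varepsilon/(2s)$, while by the choice of $m$ via Lemma~\ref{lem:meps} the second summand is strictly less than $\varepsilon/(2s)$. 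Multiplying by $s$ and summing yields $d(T^n u, x_{mn}) < \varepsilon/2 + \varepsilon/2 = \varepsilon$, i.e.\ $T^n u \in B(x_{mn}, \varepsilon)$.

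There is no real obstacle here: the lemma is essentially a bookkeeping statement that glues the previous two lemmas together through the $b$-metric triangle inequality. The only mild subtlety is the order of quantifiers, since Lemma~\ref{lem:meps} must be applied \emph{after} $n_\varepsilon$ has been fixed (the bound $\varepsilon/(2s)$ in Lemma~\ref{lem:meps} has to hold for the same $n$ that controls the contraction in Lemma~\ref{lem:neps}), but the statement of Lemma~\ref{lem:meps} is already formulated uniformly in $n \in \mathbb{N}^*$, so this causes no trouble. Note also that the formal statement contains a slight abuse of notation ($T \in \mathbb{T}(X,G)$ instead of $T\in \mathbb{T}(X,G,\varphi)$), but the hypotheses of both auxiliary lemmas are clearly in force.
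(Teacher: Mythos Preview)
Your argument is correct and matches the paper's proof essentially line for line: invoke Lemma~\ref{lem:neps} to fix $n=n_\varepsilon$, then Lemma~\ref{lem:meps} with this $n$ to fix $m=m_\varepsilon$, and conclude via the $b$-metric triangle inequality $d(T^nu,x_{mn})\le s\,d(T^nu,T^nx_{mn})+s\,d(T^nx_{mn},x_{mn})<\varepsilon$. Your remarks on the quantifier order and the missing $\varphi$ in the hypothesis are also on point; the only cosmetic slip is the phrase ``some fixed point of $X$'' for $x_0$, which should read ``some fixed starting point in $X$'' to avoid confusion with fixed points of $T$.
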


\begin{proof}
	By Lemma \ref{lem:neps} there exists $n=n_\varepsilon$ such that 
	\[d(x,y)<\varepsilon \mbox{ implies } d(T^n x, T^n y)\le \frac{\varepsilon}{2s}.\]
	We can use Lemma \ref{lem:meps} for this $n$ to obtain an $m=m_\varepsilon$ satisfying
	\[d(T^n x_{mn},x_{mn})<\frac{\varepsilon}{2s}.\]
	Now let $u\in B(x_{mn},\varepsilon)$. By the above,
	\[d(T^n u, x_{mn})\le s\cdot d(T^n u, T^n x_{mn})+s\cdot d(T^n x_{mn},x_{mn})<s\cdot \frac{\varepsilon}{2s}+s\cdot \frac{\varepsilon}{2s}=\varepsilon.\]
\end{proof}

\begin{lemma} Let $(x_k)_{k\in\mathbb{N}}$ be a sequence such that $d(x_{k+1},x_k)\to0$. Then for all $n\in \mathbb{N}^*$ and $\varepsilon>0$ there exists $m_0\in\mathbb{N}$ such that
\[d(x_{mn},x_{mn+p})<\varepsilon,\]
for all $m\ge m_0$ and $p\in\{0,\dots,n-1\}$.
\label{lem:steps}
\end{lemma}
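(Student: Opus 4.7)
The plan is straightforward: bound $d(x_{mn},x_{mn+p})$ by a weighted sum of at most $p$ consecutive-term distances (the weights being bounded powers of $s$), and then use the hypothesis $d(x_{k+1},x_k)\to 0$ to force this sum below $\varepsilon$ once $m$ is large enough.

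The case $p=0$ is immediate. For $p\in\{1,\dots,n-1\}$, I would iterate the relaxed triangle inequality $d(u,w)\le s\bigl[d(u,v)+d(v,w)\bigr]$ along the chain $x_{mn},x_{mn+1},\dots,x_{mn+p}$ to obtain the standard $b$-metric polygonal estimate
\[d(x_{mn},x_{mn+p})\le \sum_{i=0}^{p-1} \lambda_i\, d(x_{mn+i},x_{mn+i+1}),\]
where each coefficient $\lambda_i$ is a power of $s$ no larger than $s^{p-1}\le s^{n-1}$ (the exponents that appear are $s,s^{2},\dots,s^{p-1},s^{p-1}$).

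Next, having fixed $n\in\mathbb{N}^{*}$ and $\varepsilon>0$, I would choose $K\in\mathbb{N}$ with $d(x_{k+1},x_k)<\varepsilon/(n\,s^{n-1})$ for all $k\ge K$, which exists by the hypothesis, and then pick $m_0\in\mathbb{N}$ with $m_0 n\ge K$. For every $m\ge m_0$ and every $p\in\{1,\dots,n-1\}$, all the indices $mn+i$ with $0\le i\le p-1$ exceed $K$, so the polygonal estimate yields
\[d(x_{mn},x_{mn+p})<n\cdot s^{n-1}\cdot\frac{\varepsilon}{n\,s^{n-1}}=\varepsilon,\]
which is the desired conclusion.

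The only mild subtlety is the bookkeeping of the exponents of $s$ that arise while iterating the relaxed triangle inequality; beyond that, the argument is a routine ``$\varepsilon/N$'' estimate and I do not expect any real obstacle.
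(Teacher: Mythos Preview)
Your proposal is correct and follows essentially the same route as the paper: iterate the $b$-metric triangle inequality along the chain $x_{mn},x_{mn+1},\dots,x_{mn+p}$, bound each coefficient by a fixed power of $s$, and choose the threshold for $d(x_{k+1},x_k)$ accordingly. The only cosmetic difference is that you bound the coefficients by $s^{n-1}$ and pick the cutoff $\varepsilon/(n\,s^{n-1})$, whereas the paper uses the cruder bound $s^{n}$ and cutoff $\varepsilon/(n\,s^{n})$; either choice works.
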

\begin{proof}
	Let $n\in\mathbb{N}^*$ and $\varepsilon>0$. Since $d(x_{k+1},x_k)\to0$, there exists $k_0\in \mathbb{N}$ such that 
	\[d(x_{k+1},x_k)<\frac{\varepsilon}{ns^n},\]
	for all $k\ge k_0$.
	Let $m_0$ be such that $m_0n>k_0$. We have
	\[
		d(x_{mn},x_{mn+p})\le\sum_{i=0}^p  s^{i+1} d(x_{mn+i},x_{mn+i+1})\le\sum_{i=0}^{n-1}  s^n\frac{\varepsilon}{ns^n}=\varepsilon,
	\]
	for all $m>m_0$ and $p\in\{0,...,n-1\}$, concluding the proof.
\end{proof}

\begin{pr}
	The uniqueness of the fixed-point can be shown with the technique used in the proof of Lemma \ref{lem:g1}.

	In the following we prove that the sequence  $(x_k)_{k\in\mathbb{N}}$ is Cauchy. Let $\varepsilon>0$ be arbitrary. We can suppose that $\varepsilon<\varepsilon_0$ (where $\varepsilon_0$ is defined as in Lemma \ref{lem:infsup})  since in the other case $d(x_{k_1},x_{k_2})< \varepsilon_0$ implies $d(x_{k_1},x_{k_2})< \varepsilon$.

	By Lemma \ref{lem:nmeps} there exists $n,m\in\mathbb{N}^*$ such that 
	\[T^n(B(x_{mn},\varepsilon))\subseteq B(x_{mn},\varepsilon).\]
	Since $d(x_{k+1},x_k)\to 0$ we can construct an $m_0$ as in Lemma \ref{lem:steps}. Now let $\overline m=\max\{m,m_0\}$ and $k_1,k_2\in \mathbb{N}$ such that $k_1,k_2\ge \overline m n$.
	We can write $k_1=m_1n+p_1$, $k_2=m_2n+p_2$, where $p_1,p_2\in\{0,...,n-1\}$ and $m_1,m_2\ge \overline m$. 

	The construction of these indices implies
	\begin{align*}
		d(x_{k_1},x_{m_1n})<\varepsilon \mbox{ and } d(x_{m_2n},x_{k_2})<\varepsilon,&\quad\mbox{by Lemma \ref{lem:steps}};\\
		d(x_{m_1n},x_{mn})<\varepsilon\mbox{ and } d(x_{mn},x_{m_2n})<\varepsilon,&\quad\mbox{by Lemma \ref{lem:nmeps}}.
	\end{align*} 
	Therefore we can use the triangle inequality to obtain
	\begin{align*}
		d(x_{k_1},x_{k_2})&\le s d(x_{k_1},x_{m_1n})+s^2d(x_{m_1n},x_{mn})+s^3d(x_{mn},x_{m_2n})+s^3d(x_{m_2n},x_{k_2})\\
		&\le s\varepsilon+s^2\varepsilon+s^3\varepsilon+s^3\varepsilon\le 4s^3\varepsilon.
	\end{align*}

	Thus we proved that $(x_k)_{k\in\mathbb{N}}$ is a Cauchy sequence.
	Since $(X, d)$ is complete, there exists an $x^*\in X$ such that $x_{k} \to x^*$. Then by Lemma \ref{lem:liminfsup} 
    \begin{align*}
    s^{-1} d(x^*, Tx^*)&\le \liminf_{k\to\infty} d(x_{k+1}, T x^*)\le \limsup_{k\to \infty} d(x_{k+1}, T x^*) \\
    &= \limsup_{k\to\infty} d(T x_{k}, T x^*) \le \limsup_{k\to \infty} d(x_{k}, x^*)= 0,
    \end{align*}
    hence $T x^* = x^*$.
\end{pr}
\bibliographystyle{plain}
\bibliography{myref}{}
\end{document}